\documentclass[preprint]{elsarticle}

\usepackage{amssymb}
\usepackage{amsmath}
\usepackage{amsthm}
\usepackage{enumerate}
\usepackage{graphicx}
\usepackage{amsfonts}



\topmargin=1cm \oddsidemargin=1cm \evensidemargin=1cm
\textwidth=15cm \textheight=20cm

\journal{...}

\newtheorem{theorem}{Theorem}[section]

\newtheorem{cor}{Corollary}[section]

\newtheorem{remark}{Remark}[section]

\numberwithin{equation}{section}

\theoremstyle{definition}

\begin{document}

	\begin{frontmatter}

		\title{Elementary hypergeometric functions, Heun functions, and   moments of MKZ operators}
		
		\author[1]{Ana-Maria Acu\footnote{corresponding author}}
		\author[2]{Ioan Rasa}
		\address[1]{Lucian Blaga University of Sibiu, Department of Mathematics and Informatics, Str. Dr. I. Ratiu, No.5-7, RO-550012  Sibiu, Romania, e-mail: acuana77@yahoo.com}
			\address[2]{Technical University of Cluj-Napoca, Faculty of Automation and Computer Science, Department of Mathematics, Str. Memorandumului nr. 28 Cluj-Napoca, Romania,
	e-mail:  ioan.rasa@math.utcluj.ro }

		\begin{abstract} We consider some hypergeometric functions and prove that they are elementary functions. Consequently, the second order moments of
			Meyer-K\"onig and Zeller type operators are elementary functions. The
			higher order moments of these operators are expressed in terms of
			elementary functions and polylogarithms. Other applications are concerned with the expansion of certain Heun functions in series or finite sums of elementary hypergeometric functions.
			
		\end{abstract}

		\begin{keyword} hypergeometric functions, elementary functions, Meyer-K\"onig
			and Zeller type operators, polylogarithms; Heun functions
			
			\MSC[2010] 33C05, 33C90, 33E30, 41A36 
		\end{keyword}
		
	\end{frontmatter}
\section{Introduction }\label{s1} This paper is devoted to some families of elementary hypergeometric functions, with applications to the moments of Meyer-K\"onig and Zeller type operators and to the expansion of certain Heun functions in series or finite sums of elementary hypergeometric functions.

 In \cite{2}, J.A.H. Alkemade proved that the second order moment of the Meyer-K\"onig and Zeller operators can be expressed as
\begin{equation}\label{e1.1}
M_n(e_2;x)=x^2+\dfrac{x(1-x)^2}{n+1}{}_{2}F_{1}(1,2;n+2;x),\, x\in[0,1).
\end{equation}
Here $e_r(t):=t^r$, $t\in[0,1]$, $r\geq 0$, and ${}_{2}F_{1}(a,b;c;x)$ denotes the hypergeometric function.

A closed form of $M_n(e_2;x)$, showing that it is an elementary function, was given in \cite{7}. It is said in \cite[p.2]{7}:
" The opinion that the second-order moment of the celebrated  Meyer-K\"onig  and Zeller operators is a non-elementary function was tacitly accepted since 1960." The same opinion was firmly expressed in \cite[Sect. 2]{D}.

This is surprising, because it is elementary to prove that ${}_{2}F_{1}(1,2;n+2;x)$ is elementary: such proofs will be given in the next sections. Alternatively, we can use
\begin{equation}\label{e1.2}
{}_{2}F_{1}(1,2;3;x)=-2x^{-2}(x+\log(1-x)),
\end{equation}
and \cite[2.8(24)]{6}:
\begin{equation}\label{e1.3}
{}_{2}F_{1}(a,b;c+m;x)=\dfrac{(c)_m(1-x)^{m+c-a-b}}{(c-a)_m(c-b)_m}\left((1-x)^{a+b-c}{}_{2}F_{1}(a,b;c;x)\right)^{(m)},
\end{equation}
where $(r)_m:=r(r+1)\dots(r+m-1),\,m\geq 1$, and $(r)_0:=1$.

They lead to
\begin{align}\label{e1.4}
{}_{2}F_{1}(1,2;n+2;x)&=\dfrac{(3)_{n-1}(1-x)^{n-1}}{(2)_{n-1}(1)_{n-1}}\left({}_{2}F_{1}(1,2;3;x)\right)^{(n-1)}\nonumber\\
&=-\dfrac{n+1}{(n-1)!}(1-x)^{n-1}\left(x^{-1}+x^{-2}\log(1-x)\right)^{(n-1)},
\end{align}
which is obviously an elementary function. Thus $M_n(e_2;x)$ given by (\ref{e1.1}) is elementary.

On the other hand, \cite[(32)]{3} provides  a closed form of the (elementary) hypergeometric function ${}_{2}F_{1}(1,m;m+2k+1;x)$, where $m\geq 1$ and $k\geq 0$ are integers.

In Section 2 we consider the more general function ${}_{2}F_{1}(m,n;p;x)$ with $m$, $p$ positive integers, $p\geq m+1$, and $n\in {\mathbb R}$. We present a closed form of this function, showing that it is elementary. A general formula for ${}_{2}F_{1}(a,b;a+b+l;x)$ can be found in \cite[2.3(2)]{6}, but for arbitrary parameters it is not always an elementary function.
Generalizing \cite[(32)]{3}, we give in Section 3 two closed forms for the elementary function
${}_{2}F_{1}(1,m;m+l+1;x)$ with $m\geq 1$, $l\geq 0$ integers. Three closed forms for ${}_{2}F_{1}(1,2;n+2;x)$ are presented in Section 4. Using a result of \cite{1} we give in Section 5 another proof that the second MKZ moment is elementary. The same result of \cite{1} is instrumental in Section 6, where we show that the higher order MKZ moments can be expressed as finite sums of functions, but containing polylogarithms. In the final section we consider two modifications of the MKZ operators and show that their second order moments are elementary functions; here we use Theorem \ref{t2.1} and the fact that ${}_{2}F_{1}(1,3;n+3;x)$ is elementary.

Let us mention that the moments of the MKZ operators are involved in
studying the asymptotic behavior of the iterates of these operators. For
a qualitative result, see \cite{D}. Generalized qualitative and quantitative
results can be found in \cite[Sect. 2]{B}, \cite{C} and \cite[Sect.1.3]{A}.

Section \ref{s8} is devoted to some applications involving Heun equations. The power-series solutions of such an equation are governed by three-term recurrence relations between the successive coefficients, instead of two-term recurrence relations which appear in the hypergeometric case. There are few Heun equations for which the exact solutions are known in terms of simpler functions. This is why expansions of the solutions of Heun equations using other functions instead of powers were considered by many authors; see \cite{C1}, \cite{R5}, \cite{R16}, \cite{R15},  and the references therein. 
In particular, expansions in terms of hypergeometric functions and conditions for deriving finite-sum solutions  can be found in \cite{C1}.

Using results from \cite{C1}, we describe a family of Heun equations for which the solutions are expanded in series of elementary hypergeometric functions. Moreover, we present conditions under which such a series terminates and so the solutions are elementary functions. Other results and applications will be given in a forthcoming paper.

\section{The function ${}_{2}F_{1}(m,n;p;x)$}\label{s2}

\begin{theorem}\label{t2.1}
	Let $m$, $p$ be positive integers, $p\geq m+1$, and $n\in {\mathbb R}$.  Then 
	\begin{align}\label{e2.1}
	{}_{2}F_{1}(m,n;p;x)&=\dfrac{(m)_{p-m}x^{1-p}}{(p-m-1)!}\displaystyle\sum_{k=0}^{m-1}(-1)^k{m-1\choose k}\nonumber\\
	&\cdot \displaystyle\sum_{j=0}^{p-m-1}{p-m-1\choose j}(-1)^jx^{p-m-1-j}\displaystyle\sum_{i=0}^j{j\choose i}(-1)^i\displaystyle\int_{1-x}^1s^{i+k-n}ds.
	\end{align}
	Consequently, ${}_{2}F_{1}(m,n;p;x)$ is an elementary function.
\end{theorem}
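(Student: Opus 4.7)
The natural starting point is Euler's integral representation of the hypergeometric function. Using the symmetry ${}_2F_1(m,n;p;x)={}_2F_1(n,m;p;x)$ and applying Euler's formula with the integer $m$ playing the role of the middle parameter, one obtains
\begin{equation*}
{}_2F_1(m,n;p;x) \;=\; \frac{\Gamma(p)}{\Gamma(m)\,\Gamma(p-m)}\int_0^1 t^{m-1}(1-t)^{p-m-1}(1-xt)^{-n}\,dt.
\end{equation*}
The hypothesis $p\ge m+1\ge 2$ guarantees that both $m$ and $p-m$ are positive integers, so the conditions $c>b>0$ required by Euler's formula are satisfied, and the prefactor collapses to $(m)_{p-m}/(p-m-1)!$, matching the outer constant in \eqref{e2.1}.

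Next I would perform the change of variable $s=1-xt$, which maps $[0,1]$ onto $[1-x,1]$, produces the Jacobian $-1/x$, and converts the integrand into $(1-s)^{m-1}\bigl(s-(1-x)\bigr)^{p-m-1}s^{-n}$. Collecting the reciprocal powers of $x$ coming from $t^{m-1}$, $(1-t)^{p-m-1}$, and $dt$ gives exactly the prefactor $x^{1-p}$ appearing in \eqref{e2.1}, leaving $s^{-n}$ as the only non-polynomial factor in $s$.

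The triple sum in \eqref{e2.1} then emerges from three binomial expansions performed inside the integral. The expansion $(1-s)^{m-1}=\sum_{k=0}^{m-1}(-1)^k\binom{m-1}{k}s^k$ produces the outer $k$-sum. Writing $s-(1-x)=(s-1)+x$ and applying the binomial theorem yields the middle $j$-sum with terms $\binom{p-m-1}{j}x^{p-m-1-j}(s-1)^j$. Finally, expanding $(s-1)^j=\sum_{i=0}^j\binom{j}{i}(-1)^{j-i}s^i$ gives the inner $i$-sum. Using $(-1)^{j-i}=(-1)^j(-1)^i$, the signs reassemble into the pattern $(-1)^k(-1)^j(-1)^i$ displayed in \eqref{e2.1}. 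Summing the resulting powers of $s$ gives the integrand $s^{i+k-n}$; interchanging summation and integration delivers \eqref{e2.1} verbatim. Elementarity is then immediate, since for every admissible triple $(k,j,i)$ the primitive of $s^{i+k-n}$ is either a rational power of $s$ or $\log s$, so $\int_{1-x}^1 s^{i+k-n}\,ds$ is an elementary function of $x$ on $[0,1)$.

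The only non-trivial aspect I anticipate is purely cosmetic: keeping track of signs across the three independent binomial expansions, and checking that the reciprocal powers of $x$ coming from the Jacobian and from $(s-(1-x))^{p-m-1}$ recombine into the exact $x^{1-p}$ and $x^{p-m-1-j}$ factors of \eqref{e2.1}. Once this bookkeeping is carried out, the argument reduces to the simple observation that every ingredient on the right-hand side of \eqref{e2.1} is elementary.
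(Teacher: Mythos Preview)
Your argument is correct and leads to \eqref{e2.1} exactly as you describe; the sign and power bookkeeping you flag as ``cosmetic'' indeed works out.

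The route, however, is not the paper's. The paper does not quote Euler's integral representation; instead it derives an equivalent integral from scratch. Starting from the power series, it computes
\[
\bigl(x^{p-1}{}_2F_1(m,n;p;x)\bigr)^{(p-m)} = (m)_{p-m}\,\frac{x^{m-1}}{(1-x)^n},
\]
observes that the Taylor polynomial of order $p-m-1$ of $x^{p-1}{}_2F_1(m,n;p;x)$ at $0$ vanishes, and then recovers the function via the Taylor integral remainder:
\[
{}_2F_1(m,n;p;x)=\frac{(m)_{p-m}}{x^{p-1}(p-m-1)!}\int_0^x (x-t)^{p-m-1}t^{m-1}(1-t)^{-n}\,dt.
\]
After the substitution $t\mapsto xt$ this is exactly Euler's integral, so from that point the two proofs coincide (the paper expands $t^{m-1}=(1-(1-t))^{m-1}$ and $(x-t)^{p-m-1}$ binomially and substitutes $s=1-t$, which matches your three expansions after the change of variable). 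The trade-off: the paper's argument is entirely self-contained---it uses only the series definition and Taylor's formula, which is the point stressed in Remark~\ref{R2.1}---whereas your version is shorter but imports Euler's representation as a black box.
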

\begin{proof}
	According to the definition,
	$${}_{2}F_{1}(m,n;p;x)=\displaystyle\sum_{j=0}^{\infty}\dfrac{(m)_j(n)_j}{(p)_j j!}x^j.$$
	This implies
	\begin{align}\label{e2.2}
	\left(x^{p-1}{}_{2}F_{1}(m,n;p;x)\right)^{(p-m)}
	&=\displaystyle\sum_{j=0}^{\infty}\dfrac{(m)_j}{(p)_j}{n+j-1\choose j}\left(x^{j+p-1}\right)^{(p-m)}\nonumber\\
	&=(m)_{p-m}x^{m-1}\displaystyle\sum_{j=0}^{\infty}{n+j-1\choose j}x^j=(m)_{p-m}\dfrac{x^{m-1}}{(1-x)^n}.
	\end{align}
	For the function $x^{p-1}{}_{2}F_{1}(m,n;p;x)$, the Taylor polynomial of degree $p-2\geq p-1-m$ at 0 vanishes, and so
	\begin{align*}
	{}_{2}F_{1}(m,n;p;x)&=\dfrac{1}{x^{p-1}}\dfrac{1}{(p-m-1)!}\displaystyle\int_0^t(x-t)^{p-m-1}\left(t^{p-1}{}_{2}F_{1}(m,n;p;t)\right)^{(p-m)}dt\\
	&=\dfrac{1}{x^{p-1}}\dfrac{1}{(p-m-1)!}\displaystyle\int_0^t(x-t)^{p-m-1}(m)_{p-m}t^{m-1}(1-t)^{-n}d.
	\end{align*}
	Therefore,
	\begin{align*}
	{}_{2}F_{1}(m,n;p;x)=\dfrac{1}{x^{p-1}}\dfrac{(m)_{p-m}}{(p-m-1)!}\displaystyle\sum_{k=0}^{m-1}(-1)^k{m-1\choose k}\int_0^x(x-t)^{p-m-1}(1-t)^{k-n}dt.
	\end{align*}
	Now use the binomial formula for $(x-t)^{p-m-1}$ and set $s:=1-t$ in order to verify that the last integral equals
	$$ \displaystyle\sum_{j=0}^{p-m-1}{p-m-1\choose j}(-1)^j x^{p-m-1-j}\displaystyle\sum_{i=0}^j{j\choose i}(-1)^i\displaystyle\int_{1-x}^1s^{i+k-n} ds. $$
	This concludes the proof of (\ref{e2.1}).
\end{proof}
\begin{remark}\label{R2.1} The proof of Theorem \ref{t2.1} is truly elementary: it uses only the
definition of the hypergeometric function and Taylor's formula. Another
proof has been recently given in \cite{abel}.\end{remark}

For $m=1$, Theorem \ref{t2.1} yields 

\begin{cor}\label{c2.2}
	Let $p\geq 2$ be an integer, and $n\in {\mathbb R}$. Then
	\begin{equation}\label{e2.3}
	{}_{2}F_{1}(1,n;p;x)=\dfrac{p-1}{x^{p-1}}\displaystyle\sum_{i=0}^{p-2}{{p-2}\choose i}(x-1)^{p-2-i}\displaystyle\int_{1-x}^1 s^{i-n} ds.
	\end{equation}
	\end{cor}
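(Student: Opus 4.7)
The plan is to specialize Theorem~\ref{t2.1} to $m=1$ and then collapse the resulting double sum into the single sum appearing in (\ref{e2.3}). First I would compute the prefactor: with $m=1$, we have $(m)_{p-m}=(1)_{p-1}=(p-1)!$, and the outer sum over $k\in\{0,\dots,m-1\}$ reduces to its single term at $k=0$ with $\binom{0}{0}=1$. Dividing by $(p-m-1)!=(p-2)!$ gives the clean factor $(p-1)x^{1-p}=\dfrac{p-1}{x^{p-1}}$, already matching the desired prefactor in (\ref{e2.3}).

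Next I would turn to the double sum
\[
\sum_{j=0}^{p-2}\binom{p-2}{j}(-1)^j x^{p-2-j}\sum_{i=0}^{j}\binom{j}{i}(-1)^i\int_{1-x}^1 s^{i-n}\,ds.
\]
Since the integral depends only on $i$, I would swap the order of summation, so that $i$ runs from $0$ to $p-2$ and, for fixed $i$, $j$ runs from $i$ to $p-2$. This produces the inner sum $\sum_{j=i}^{p-2}\binom{p-2}{j}\binom{j}{i}(-1)^j x^{p-2-j}$, which I would simplify using the standard trinomial revision identity $\binom{p-2}{j}\binom{j}{i}=\binom{p-2}{i}\binom{p-2-i}{j-i}$ and then change the summation variable to $k=j-i$.

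At that point the inner sum becomes $(-1)^i\binom{p-2}{i}\sum_{k=0}^{p-2-i}\binom{p-2-i}{k}(-1)^k x^{p-2-i-k}$, and the binomial theorem recognizes the latter sum as $(x-1)^{p-2-i}$; the factor $(-1)^i$ is absorbed on the spot (or, equivalently, I can note that $(-1)^i(x-1)^{p-2-i}=(-1)^{p-2}(1-x)^{p-2-i}\cdot(-1)^{i-(p-2)}$, but the cleaner route is simply to keep the form $(x-1)^{p-2-i}$ as in the statement). Collecting everything yields exactly the right-hand side of (\ref{e2.3}).

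The work is entirely routine algebraic bookkeeping; the only place to be careful is the simultaneous handling of the three sign factors $(-1)^j$, $(-1)^i$, and the sign hidden inside $(x-1)^{p-2-i}$ after swapping the order of summation, so I would double-check that step with a small case (say $p=3$) before declaring the identification complete.
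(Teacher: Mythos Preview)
Your proposal is correct and follows essentially the same route as the paper's own proof: specialize (\ref{e2.1}) to $m=1$, interchange the order of summation, apply the trinomial revision $\binom{p-2}{j}\binom{j}{i}=\binom{p-2}{i}\binom{p-2-i}{j-i}$, and recognize $(x-1)^{p-2-i}$ via the binomial theorem. The sign bookkeeping you flag is handled exactly as you describe, with the two factors of $(-1)^i$ cancelling.
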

\begin{proof}
	Setting $m=1$ in (\ref{e2.1}), we get
	\begin{align*}
	{}_{2}F_{1}(1,n;p;x)&=\dfrac{p-1}{x^{p-1}}\displaystyle\sum_{j=0}^{p-2}{p-2\choose j}(-1)^jx^{p-2-j}\displaystyle\sum_{i=0}^j{j\choose i}(-1)^i
\displaystyle\int_{1-x}^1	s^{i-n} ds\\
&=\displaystyle\dfrac{p-1}{x^{p-1}}\displaystyle\sum_{i=0}^{p-2}(-1)^i{p-2\choose i}\displaystyle\sum_{j=i}^{p-2}{p-2-i\choose j-i}(-1)^j x^{p-2-j}\displaystyle\int_{1-x}^1 s^{i-n}ds\\
&=\displaystyle\dfrac{p-1}{x^{p-1}}\displaystyle\sum_{i=0}^{p-2}{p-2\choose i}(x-1)^{p-2-i}\displaystyle\int_{1-x}^1 s^{i-n} ds.
\end{align*}
\end{proof}

\begin{remark}
	Let $\lambda,\mu$, and $\nu$ be nonnegative integers. Theorem \ref{t2.1} may be considered also in analogy to the following result (see \cite[p. 69]{6}, \cite[(23)]{A1}):
	\begin{align}
&	{}_{2}F_{1}(\nu+1,\nu+\mu+1;\nu+\mu+\lambda+2;x)\nonumber\\
	&=\dfrac{(-1)^{\mu+1}(\nu+\mu+\lambda+1)!}{\lambda!\nu!(\nu+\mu)!(\mu+\lambda)!}\dfrac{d^{\nu+\mu}}{dx^{\nu+\mu}}\left((1-x)^{\mu+\lambda}\dfrac{d^{\lambda}}{dx^{\lambda}}\left(\dfrac{\log(1-x)}{x}\right)\right).\label{e2.4}
	\end{align}
	Setting $m=\nu+1$, $n=\nu+\mu+1$, $p=\nu+\mu+\lambda+2$, (\ref{e2.4}) leads to 
		\begin{align}
	&	{}_{2}F_{1}(m,n;p;x)\nonumber\\
	&=(-1)^{n-m+1}\dfrac{p-1}{(p-2)!}{p-2\choose n-1}{p-2\choose m-1}\dfrac{d^{n-1}}{dx^{n-1}}\left((1-x)^{p-m-1}\dfrac{d^{p-n-1}}{dx^{p-n-1}}\left(\dfrac{\log(1-x)}{x}\right)\right),\label{e2.5}
	\end{align}
	for all integers $m,n,p$ with $p\geq n+1\geq m+1\geq 2$.
\end{remark}

\section{The function 	${}_{2}F_{1}(1,m;m+l+1;x)$}\label{s3}
\begin{theorem}	
	Let $m\geq 1$ and $l\geq 0$ be integers. Then 
	\begin{align}
	{}_{2}F_{1}(1,m;m+l+1;x)&=\dfrac{(m)_{l+1}}{x^{m+l}}\dfrac{(-1)^{l+1}}{l!}(1-x)^l\log(1-x)\nonumber\\
	&+\dfrac{m+l}{x^{m+l}}\displaystyle\sum_{i=0,i\ne m-1}^{m+l-1}{m+l-1\choose i}
	\dfrac{(-1)^{m+l-1-i}}{i-m+1}\left((1-x)^{m+l-1-i}-(1-x)^l\right),\label{e3.1}
	\end{align}
	and also
	\begin{align}
	{}_{2}F_{1}(1,m;m+l+1;x)&=\dfrac{(m)_{l+1}}{x^{m+l}}\dfrac{(-1)^{l+1}}{l!}(1-x)^l\log(1-x)\nonumber\\
	&+\dfrac{(m)_{l+1}}{x^{m+l}}\left\{\dfrac{(-1)^{l+1}}{l!}\displaystyle\sum_{j=1}^l x^j\sum_{i=0}^{j-1}\dfrac{(-1)^i}{j-i}{l\choose i}-\sum_{i=0}^{m-2}\dfrac{x^{l+i+1}}{(i+1)_{l+1}}\right\}.\label{e3.2}
	\end{align}
\end{theorem}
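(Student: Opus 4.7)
The plan is to derive both identities from Corollary~\ref{c2.2} applied with $p=m+l+1$ and with the second parameter $n$ of that corollary specialized to $m$, which gives the common integral representation
\[
{}_{2}F_{1}(1,m;m+l+1;x)=\frac{m+l}{x^{m+l}}\sum_{i=0}^{m+l-1}\binom{m+l-1}{i}(x-1)^{m+l-1-i}\int_{1-x}^{1}s^{i-m}\,ds.
\]

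To prove (\ref{e3.1}) I would evaluate each inner integral explicitly: the exceptional index $i=m-1$ yields $-\log(1-x)$, while every other $i$ gives $(1-(1-x)^{i-m+1})/(i-m+1)$. The logarithmic term, combined with $(m+l)\binom{m+l-1}{m-1}=(m)_{l+1}/l!$ and $(x-1)^{l}(-\log(1-x))=(-1)^{l+1}(1-x)^{l}\log(1-x)$, produces the logarithmic summand common to (\ref{e3.1}) and (\ref{e3.2}). For $i\ne m-1$ the identity $(x-1)^{m+l-1-i}(1-x)^{i-m+1}=(-1)^{m+l-1-i}(1-x)^{l}$ collapses each summand into $(1-x)^{m+l-1-i}-(1-x)^{l}$, reproducing the rational part of (\ref{e3.1}).

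For (\ref{e3.2}) the expansion is in powers of $x$ rather than of $(1-x)$, so I would switch to series comparison. Using $(m)_k/(m+l+1)_k=(m)_{l+1}(k+m-1)!/(k+m+l)!$, the defining series becomes
\[
{}_{2}F_{1}(1,m;m+l+1;x)=\frac{(m)_{l+1}}{x^{m+l}}\sum_{n=m+l}^{\infty}\frac{(n-l-1)!}{n!}x^{n},
\]
and I would split the tail as $\sum_{n\ge l+1}-\sum_{n=l+1}^{m+l-1}$. Under the substitution $i=n-l-1$ with $(i+1)_{l+1}=(i+l+1)!/i!$, the finite piece coincides exactly with the subtracted sum $\sum_{i=0}^{m-2}x^{l+i+1}/(i+1)_{l+1}$ appearing in (\ref{e3.2}).

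It remains to identify $\sum_{n\ge l+1}(n-l-1)!\,x^{n}/n!$ with $((-1)^{l+1}/l!)\bigl[(1-x)^{l}\log(1-x)+\sum_{j=1}^{l}x^{j}\sum_{i=0}^{j-1}(-1)^{i}\binom{l}{i}/(j-i)\bigr]$. To do this I would multiply $-\sum_{k\ge1}x^{k}/k$ by $(1-x)^{l}=\sum_{i=0}^{l}\binom{l}{i}(-x)^{i}$ and regroup by powers of $x$: for $n\le l$ the inner index runs only up to $n-1$, producing a finite sum that is cancelled exactly by the added $\sum_{j=1}^{l}x^{j}\sum_{i}\dots$ of (\ref{e3.2}); for $n\ge l+1$ the full range $0\le i\le l$ is available, and the Beta-integral identity $\sum_{i=0}^{l}(-1)^{i}\binom{l}{i}/(m'+i)=(m'-1)!\,l!/(m'+l)!$ with $m'=n-l$ collapses the inner sum to $(-1)^{l+1}(n-l-1)!\,l!/n!$, giving the required tail. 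The main obstacle is purely combinatorial bookkeeping: balancing signs between $(x-1)^{k}$ and $(1-x)^{k}$, isolating the logarithmic index in the first argument, and aligning the series split at $n=m+l$ with the Beta identity in the second.
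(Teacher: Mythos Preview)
Your derivation of (\ref{e3.1}) is exactly the paper's: both apply Corollary~\ref{c2.2} with $p=m+l+1$ and second parameter $m$, single out the logarithmic index $i=m-1$, and simplify the remaining integrals. This part is fine and essentially identical to the source.

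For (\ref{e3.2}) your argument is correct but genuinely different from the paper's. The paper does not manipulate the hypergeometric series directly. Instead it invokes the differential identity (\ref{e2.2}) in the form
\[
\bigl(x^{m+l}\,{}_{2}F_{1}(1,m;m+l+1;x)\bigr)^{(l+1)}=(m)_{l+1}\frac{x^{m-1}}{1-x}=(m)_{l+1}\Bigl(\frac{1}{1-x}-1-\cdots-x^{m-2}\Bigr),
\]
integrates back $l+1$ times, and observes that the resulting undetermined polynomial $Q_l$ of degree $\le l$ is forced---by divisibility of the left side by $x^{m+l}$---to be the degree-$l$ Taylor polynomial of $\tfrac{(-1)^{l+1}}{l!}(1-x)^l\log(1-x)$ at $0$; computing that Taylor polynomial yields the inner sum $\sum_{j=1}^{l}x^{j}\sum_{i=0}^{j-1}(-1)^{i}\binom{l}{i}/(j-i)$. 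Your route replaces this ``differentiate, integrate, match Taylor data'' mechanism with a purely combinatorial one: a tail--head split of the series at $n=m+l$, a Cauchy product for $(1-x)^l\log(1-x)$, and the partial-fraction/Beta identity $\sum_{i=0}^{l}(-1)^{i}\binom{l}{i}/(m'+i)=(m'-1)!\,l!/(m'+l)!$. The paper's argument is shorter and more structural (the polynomial correction is pinned down in one line by a divisibility condition), while yours is self-contained at the level of power series and does not appeal back to (\ref{e2.2}); both reach the same closed form.
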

\begin{proof}
	(\ref{e3.1}) is a consequence of (\ref{e2.3}).
	
	To prove (\ref{e3.2}), let us replace in (\ref{e2.2}) $m$ by 1, $n$ by $m$, an $p$ by $m+l+1$; thus we have
	$$\left(x^{n+l} {}_{2}F_{1}(1,m;m+l+1;x)\right)^{(l+1)}=(m)_{l+1}\dfrac{x^{m-1}}{1-x}=(m)_{l+1}\left(\dfrac{1}{1-x}-1-x-\cdots-x^{m-2}\right).  $$
	It follows that 
	$$ x^{m+l}{}_{2}F_{1}(1,m;m+l+1;x)=(m)_{l+1}\left\{(-1)^{l+1}\dfrac{(1-x)^l}{l!}\log(1-x)-\displaystyle\sum_{i=0}^{m-2}\dfrac{x^{l+1+i}}{(i+1)_{l+1}}-Q_l(x)\right\}, $$
	where $Q_l$ is a polynomial of degree less or equal to $l$.
	
	The right hand side should be divisible by $x^{m+l}$, hence $Q_l(x)$ is the Taylor  polynomial of degree $l$, at 0, for the function $(-1)^{l+1}\dfrac{(1-x)^l}{l!}\log(1-x)$. It is not difficult to obtain
	$$ Q_l(x)=\dfrac{(-1)^l}{l!}\displaystyle\sum_{j=1}^{l}x^j\sum_{i=0}^{j-1}
	\dfrac{(-1)^i}{j-i}{l\choose i}. $$
	Putting all things together we get (\ref{e3.2}).
\end{proof}
\begin{remark}
	Equations (\ref{e3.1}) and (\ref{e3.2}) extend Eq. (32) in \cite{3}.
\end{remark}
\section{The function ${}_{2}F_{1}(1,2;n+2;x)$}\label{s4} For this function, which is involved in the representation (\ref{e1.1}), we have three explicit expressions:
\begin{theorem}
	Let $n\geq 1$ be an integer.Then
	\begin{align}
	{}_{2}F_{1}(1,2;n+2;x)&=(-1)^n\dfrac{n+1}{x^{n+1}}\left\{(1-x)^{n-1}\left[
		x+n\log(1-x)\right]-\displaystyle\sum_{j=1}^{n-1}(-1)^j\dfrac{n-j}{j}x^j(1-x)^{n-j-1}\right\},\label{e4.1}\\
		{}_{2}F_{1}(1,2;n+2;x)&=(-1)^n\dfrac{n+1}{x^{n+1}}\left\{(1-x)^{n-1}\left[x+n\log(1-x)\right]\right.\nonumber\\
	&\left.+\displaystyle\sum_{i=2}^n\dfrac{(-1)^i}{i-1}{n\choose i}\left[(1-x)^{n-i}-(1-x)^{n-1}\right]\right\},\label{e4.2}\\
	{}_{2}F_{1}(1,2;n+2;x)&=(-1)^n\dfrac{n(n+1)}{x^{n+1}}\left\{(1-x)^{n-1}\log(1-x)+\displaystyle\sum_{j=1}^{n-1}x^j\sum_{i=0}^{j-1}\dfrac{(-1)^i}{j-i}{n-1\choose i}\right\}-\dfrac{n+1}{x}.\label{e4.3}
	\end{align}
\end{theorem}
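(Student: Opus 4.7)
The plan is to derive each of the three displayed formulas by a short specialization of a result already established earlier in the paper, with only (\ref{e4.1}) requiring a nontrivial computation.

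For (\ref{e4.2}), I would specialize Corollary \ref{c2.2} with $p=n+2$ and second parameter $2$; the antiderivative $\int_{1-x}^1 s^{i-2}\,ds$ splits into three cases: $x/(1-x)$ when $i=0$, $-\log(1-x)$ when $i=1$, and $(1-(1-x)^{i-1})/(i-1)$ when $i\ge 2$. Writing $(x-1)^{n-i}=(-1)^{n-i}(1-x)^{n-i}$, the $i=0$ contribution collapses to $(-1)^n x(1-x)^{n-1}$, the $i=1$ contribution to $n(-1)^n(1-x)^{n-1}\log(1-x)$, and the remaining $i\ge 2$ terms assemble exactly into the sum displayed in (\ref{e4.2}).

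For (\ref{e4.3}), I would specialize (\ref{e3.2}) with $m=2$ and $l=n-1$. Here $(m)_{l+1}=(n+1)!$, and the residual polynomial $\sum_{i=0}^{m-2}x^{l+i+1}/(i+1)_{l+1}$ contains just the single term $x^n/n!$, which after multiplication by $(n+1)!/x^{n+1}$ yields the standalone $(n+1)/x$. The log term and the iterated sum transfer directly from (\ref{e3.2}).

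For (\ref{e4.1}), the cleanest route is via formula (\ref{e1.4}) from the Introduction, which expresses ${}_{2}F_{1}(1,2;n+2;x)$ as $-\tfrac{n+1}{(n-1)!}(1-x)^{n-1}\bigl(x^{-1}+x^{-2}\log(1-x)\bigr)^{(n-1)}$. Applying Leibniz's rule to $[x^{-2}\log(1-x)]^{(n-1)}$ with $(x^{-2})^{(k)}=(-1)^k(k+1)!\,x^{-k-2}$ and $[\log(1-x)]^{(j)}=-(j-1)!\,(1-x)^{-j}$ for $j\ge 1$, the $k=n-1$ term produces the logarithmic piece, while the $0\le k\le n-2$ terms, after the algebraic simplification $\binom{n-1}{k}(k+1)!(n-2-k)!/(n-1)!=(k+1)/(n-1-k)$ followed by the reindexation $j=n-1-k$, turn into $-\sum_{j=1}^{n-1}(-1)^j\tfrac{n-j}{j}x^j(1-x)^{n-1-j}$. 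Combining this with the explicit $(x^{-1})^{(n-1)}=(-1)^{n-1}(n-1)!\,x^{-n}$ and grouping the $(1-x)^{n-1}$ factor gives (\ref{e4.1}). The main obstacle is the sign and factorial bookkeeping in this Leibniz computation, especially the alternation $(-1)^{n-1-j}=(-1)^{n-1}(-1)^j$ that appears under the reindexation; the other two identities reduce to routine substitution once the appropriate antiderivative or truncated Taylor series is in hand.
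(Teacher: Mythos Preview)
Your proposal is correct and follows essentially the same route as the paper: (\ref{e4.1}) by carrying out the Leibniz differentiation implicit in (\ref{e1.4}), (\ref{e4.3}) by specializing (\ref{e3.2}) with $m=2$, $l=n-1$, and (\ref{e4.2}) by specializing the $p=n+2$, $n=2$ case of Corollary~\ref{c2.2}---the paper cites (\ref{e3.1}) instead, but since (\ref{e3.1}) is itself obtained from (\ref{e2.3}) this is the same computation one step removed. Your detailed bookkeeping for the Leibniz expansion (the factorial simplification $\binom{n-1}{k}(k+1)!(n-2-k)!/(n-1)!=(k+1)/(n-1-k)$ and the reindexation $j=n-1-k$) is accurate.
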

\begin{proof}
	(\ref{e4.1}) can be obtained if we continue the calculation in (\ref{e1.4}). (\ref{e4.2}) is a consequence of (\ref{e3.1}), and (\ref{e4.3}) is obtained from (\ref{e3.2}).
\end{proof}

\begin{remark}
	Other expressions for $	{}_{2}F_{1}(1,m;m+l+1;x)$ and $	{}_{2}F_{1}(1,2;n+2;x)$ can be obtained from (\ref{e2.5}).
\end{remark}

\section{The second MKZ moment is elementary: another proof}\label{s5}
From \cite[(6) and (7)]{1} we know that
$$ M_ne_r(x)=1+(1-x)^{n+1}\displaystyle\sum_{j=1}^r{r\choose j}\dfrac{(-n)^j}{(j-1)!}\int_0^{\infty}\dfrac{t^{j-1}e^{-nt}}{(1-xe^{-t})^{n+1}}dt. $$
On the other hand,
\begin{align*}
\displaystyle\int_0^{\infty} t^{j-1}e^{-nt}(1-xe^{-t})^{-n-1}dt &=\int_0^{\infty}t^{j-1}e^{-nt}\displaystyle\sum_{k=0}^{\infty}{-n-1\choose k}(-1)^kx^ke^{-kt}dt\\
&=\displaystyle\sum_{k=0}^{\infty}{n+k\choose k}x^k\int_0^{\infty} t^{j-1}e^{-(n+k)t}dt=\displaystyle\sum_{k=0}^{\infty}{n+k\choose k}x^k\dfrac{\Gamma(j)}{(n+k)^j},
\end{align*}
so that 
\begin{equation}\label{e5.1}
M_n e_r(x)=1+(1-x)^{n+1}\displaystyle\sum_{j=1}^r{r\choose j}(-n)^jf_{n,j}(x),
\end{equation}
where
\begin{equation}\label{e5.2}
f_{n,j}(x):=\displaystyle\sum_{k=0}^{\infty}{n+k\choose k}\dfrac{1}{(n+k)^j}x^k,\, n\geq 1,\, j\geq 0.
\end{equation}
Let us remark that
\begin{equation}
f_{n,j}(0)=\dfrac{1}{n^j}.
\end{equation}
Moreover,
$$ \left(x^nf_{n,j}(x)\right)^{\prime}=\displaystyle\sum_{k=0}^{\infty}{n+k\choose k}\dfrac{1}{(n+k)^{j-1}}x^{n-1+k}=x^{n-1}f_{n,j-1},  $$
and so
\begin{equation}\label{e5.4}
f_{n,j}(x)=\dfrac{1}{x^n}\int_0^x t^{n-1}f_{n,j-1}(t) dt,\,j\geq 1.
\end{equation}
We have $f_{n,0}(x)=\dfrac{1}{(1-x)^{n+1}}$, and (\ref{e5.4}) yields
\begin{equation}\label{e5.5}
f_{n,1}(x)=\dfrac{1}{n(1-x)^n}.
\end{equation}
From (\ref{e5.4}) and (\ref{e5.5}) it is easy to obtain
\begin{equation}\label{e5.6}
f_{n,2}(x)=\dfrac{(-1)^{n-1}}{nx^n}\left\{\displaystyle\sum_{i=1}^{n-1}{n-1\choose i}\dfrac{(-1)^i}{i}\dfrac{1-(1-x)^i}{(1-x)^i}-\log(1-x)\right\}.
\end{equation}
Now (\ref{e5.1}), (\ref{e5.5}) and (\ref{e5.6}) show that $M_ne_2(x)$ is an elementary function.

\section{Higher order moments of the MKZ operators}\label{s6}
We recall the definition (see \cite{9}) of the dilogarithm
\begin{equation}\label{e6.1}
Li_2(x):=-\int_0^x\dfrac{\log(1-t)}{t}dt=\displaystyle\sum_{k=1}^{\infty}\dfrac{x^k}{k^2},\,\,|x|\leq 1,
\end{equation}
and the polylogarithm of order $n$,
\begin{equation}\label{e6.2}
Li_n(x):=\displaystyle\int_0^x\dfrac{Li_{n-1}(t)}{t}dt=\displaystyle\sum_{k=1}^{\infty}\dfrac{x^k}{k^n},\,\, n\geq 3.
\end{equation}
Using (\ref{e5.4}) and (\ref{e5.6}) we get
\begin{align*}
f_{n,3}(x)&=\dfrac{(-1)^{n-1}}{nx^n}\displaystyle\int_0^x\left\{\displaystyle\sum_{i=1}^{n-1}{n-1\choose i} \dfrac{(-1)^i}{i}\dfrac{1-(1-t)^i}{t(1-t)^i}-\dfrac{\log(1-t)}{t}\right\}dt\\
&=\dfrac{(-1)^{n-1}}{nx^n}\int_0^x\left\{\displaystyle\sum_{i=1}^{n-1}{n-1\choose i}\dfrac{(-1)^i}{i}\left(\displaystyle\sum_{l=0}^{i-2}(1-t)^{l-i}+\dfrac{1}{1-t}\right)-\dfrac{\log(1-t)}{t}\right\}dt,
\end{align*}
so that finally,
\begin{equation}
\label{e6.3} 
f_{n,3}(x)=\dfrac{(-1)^{n-1}}{nx^n}\left\{\displaystyle\sum_{i=1}^{n-1}{n-1\choose i}\dfrac{(-1)^i}{i}\left[\displaystyle\sum_{l=0}^{i-2}\dfrac{1-(1-x)^{i-l-1}}{(i\!-\!l\!-\!1)(1\!-\!x)^{i-l-1}}\!-\!\log(1\!-\!x)\right]+Li_2(x)\right\}.
\end{equation}
\begin{theorem}\label{t6.1}
	Let $n\geq 2$. For each $j\geq 2$, the functions $x^nf_{n,j}(x)$ are linear combinations, with constant coefficients, of $n$ functions as follows:
	\begin{itemize}
		\item [1)] For $2\leq j\leq n$:
		$$ \dfrac{1-(1-x)^i}{(1-x)^i},\,\, i=1,\dots,n-j+1;\,\, \log(1-x);\,Li_2(x),\dots,Li_{j-1}(x).  $$
		\item [2)] For $j=n+1$:
		$$ \log(1-x);\,\, Li_2(x),\dots, Li_n(x). $$
		\item [3)] For $j\geq n+2$:
		$$Li_{j-n}(x),\dots, Li_{j-1}(x).  $$
	\end{itemize}
\end{theorem}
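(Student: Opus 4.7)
The plan is to proceed by induction on $j\geq 2$, using the recursion (\ref{e5.4}) in the form
\[F_{n,j}(x)=\int_0^x \frac{F_{n,j-1}(t)}{t}\,dt,\qquad F_{n,j}(x):=x^n f_{n,j}(x).\]
The base case $j=2$ is handed to us by formula (\ref{e5.6}), which already presents $F_{n,2}$ as the constant-coefficient combination required by part 1) with $j=2$, namely of the $n-1$ functions $(1-(1-x)^i)/(1-x)^i$ for $i=1,\ldots,n-1$ together with $\log(1-x)$. Since every basis function on the three lists vanishes at $x=0$, the operator $g\mapsto\int_0^x g(t)/t\,dt$ is well defined on them and produces no constants of integration.

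For the inductive step I would verify that this operator sends each basis function at level $j-1$ into the linear span of the basis at level $j$. This rests on three antiderivative identities. First, the factorization $1-(1-t)^i=t\sum_{k=0}^{i-1}(1-t)^k$ yields
\[\frac{1-(1-t)^i}{t(1-t)^i}=\sum_{m=1}^{i}(1-t)^{-m},\]
and integrating from $0$ to $x$ gives $-\log(1-x)$ (from $m=1$) together with $\frac{1}{m-1}\cdot\frac{1-(1-x)^{m-1}}{(1-x)^{m-1}}$ for $m=2,\ldots,i$. Second, $\int_0^x\log(1-t)/t\,dt=-Li_2(x)$. Third, $\int_0^x Li_k(t)/t\,dt=Li_{k+1}(x)$ directly from (\ref{e6.2}).

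Applied termwise, these identities effect exactly the transformations expected of the basis: the range of the power-type index $i$ shifts down by one, the $\log(1-x)$ term is regenerated, each $Li_k$ is promoted to $Li_{k+1}$, and a fresh $Li_2$ appears from the $\log$ integration to cover the bottom of the polylogarithm range. The three cases 1)--3) of the statement then correspond to the $j$-ranges in which the shape of the basis is stable: at $j=n+1$ the power-type terms vanish because their index range becomes empty, and at $j=n+2$ the $\log$ term is itself absorbed as $-Li_2$.

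The main obstacle I anticipate is the bookkeeping at the two boundary transitions $j=n\to n+1$ and $j=n+1\to n+2$, where one must check that the indices in the three antiderivative rules mesh correctly with the qualitative change in the shape of the basis; inside each of the three $j$-regimes the inductive step is uniform and the calculus itself is entirely routine.
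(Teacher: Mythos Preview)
Your proposal is correct and follows essentially the same route as the paper: induction on $j$ with base case supplied by (\ref{e5.6}) and inductive step driven by the recurrence (\ref{e5.4}) together with the polylogarithm identities (\ref{e6.1}), (\ref{e6.2}). The paper's proof is more terse and records the $j=3$ case (\ref{e6.3}) explicitly, but your three antiderivative rules are exactly what underlie that computation and the subsequent induction.
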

\begin{proof}
	We use induction on $j$. The first step, involving $j=2$, $j=3$, is accomplished by using (\ref{e5.6}) and (\ref{e6.3}). Then the induction is continued with the recurrence relation (\ref{e5.4}) and (\ref{e6.1}), (\ref{e6.2}).
\end{proof}
\begin{remark}
	Using (\ref{e5.1}) and Theorem \ref{t6.1} we get information about the structure of the moments $M_ne_r$. In particular, we see once more that $M_ne_2$ is elementary. The higher order moments contain polylogarithms.
\end{remark}

\section{Other operators}\label{s7} Let $m_{n,k}(x):={n+k\choose k}(1-x)^{n+1}x^k$, $x\in[0,1]$, and for $f\in L_1[0,1]$,
$$ L_n(f;x):=\displaystyle\sum_{k=0}^{\infty}\dfrac{(n+k+1)(n+k+2)}{n+1}\left(\displaystyle\int_0^1m_{n,k}(t)f(t)dt\right)m_{n,k}(x). $$

The operators $L_n$ were investigated in \cite{5}; they coincide with the operators $M_{n,2}$ from \cite{8}; details can be found in \cite{4}.

It was proved in \cite{5} (see also \cite[p. 123]{4}) that
\begin{equation}
\label{e7.1} L_n(e_2;x)=x^2+\dfrac{2x(1-x)^2}{n+2}{}_2F_1(1,3;n+3;x).
\end{equation}

Closed forms of the elementary functions ${}_2F_1(1,3;n+3;x)$ can be obtained from (\ref{e3.1}) and (\ref{e3.2}) taking $m=3$, $l=n-1$.

In particular, we see that $L_n(e_2;x)$ is an elementary function. It would be interesting to study from this point of view the higher order moments of $L_n$ and, more generally, of other modifications of the MKZ operators presented in \cite{4}.

\begin{remark} In \cite[Chapter 7]{Alkemade} the generalized Meyer-K\"onig and
	Zeller operators are defined as follows:	
	\begin{align*}
	\left(M_{n,r}^{\alpha,\beta}f\right)(x)&:=(1-x)^{n+r}\displaystyle\sum_{k=0}^{\infty}{n+r+k-1\choose k}x^k f\left(\dfrac{k+\beta}{n+k+\alpha}\right),\\
	&	x\in[0,1),\,\,\alpha,\beta\in{\mathbb R},\,\, \alpha\geq\beta\geq 0,\,\, r\in {\mathbb Z}, n,n+r\in {\mathbb N},\end{align*}
	and it is proved that
	\begin{align}\label{e7.2}
	\left(M_{n,r}^{\alpha,\beta}e_1\right)(x)&=\dfrac{\beta}{n+\alpha}{}_2F_1(1,\alpha-r;n+1+\alpha;x)+\dfrac{n+r-\beta}{n+1+\alpha}x{}_2F_1(1,\alpha-r+1,n+2+\alpha;x).
	\end{align}
	If $\alpha\geq 0$ is an integer, we can apply Theorem \ref{t2.1} to conclude that $M_{n,r}^{\alpha,\beta}e_1$ is an elementary function.
	
	Moreover, suppose that $\alpha\geq 0$ is an integer and $r=\alpha+1$. Then (\ref{e7.2}) yields
	\begin{equation}\label{e7.3}
	\left(M_{n,\alpha+1}^{\alpha,\beta}e_1\right)(x)=\dfrac{\beta}{n+\alpha}+\left(1-\dfrac{\beta}{n+\alpha}\right)x.
	\end{equation}
	Let  $g_j(x):=(1-x)^j,\,\, j=0,1,\dots.$ Using a trick invented by Professor Ulrich Abel (see \cite{*}), we can write
	\begin{align*}
	\left(M_{n,\alpha+1}^{\alpha,\beta}g_j\right)(x)&=\dfrac{(n+\alpha-\beta)^j}{(n+\alpha)!}(1-x)^{n+\alpha+1}\displaystyle\sum_{k=0}^{\infty}\dfrac{(n+k+\alpha)!}{k!}\dfrac{x^k}{(n+k+\alpha)^j}\\
	&=\dfrac{(n+\alpha-\beta)^j}{(n+\alpha)!}(1-x)^{n+\alpha+1}\left(\displaystyle\sum_{k=0}^{\infty}\dfrac{x^{n+k+\alpha}}{(n+k+\alpha)^j}\right)^{(n+\alpha)}\\
	&=\dfrac{(n+\alpha-\beta)^j}{(n+\alpha)!}(1-x)^{n+\alpha+1}\left(Li_j(x)\right)^{(n+\alpha)}.
	\end{align*}
	Combined with $e_m=\displaystyle\sum_{j=0}^m(-1)^j{m\choose j}g_j$, this leads to 
	$$ \left(M_{n,\alpha+1}^{\alpha,\beta}e_m\right)(x)=\dfrac{(1-x)^{n+\alpha+1}}{(n+\alpha)!}\displaystyle\sum_{j=0}^m (-1)^j{m\choose j}(n+\alpha-\beta)^j\left(Li_j(x)\right)^{(n+\alpha)}. $$
	Now it is easy to recover (\ref{e7.3}) and to see that $M_{n,\alpha+1}^{\alpha,\beta}e_2$ is an elementary function.

\end{remark}

\section{ Elementary Heun functions}\label{s8}
The general Heun equation in canonical form is  (see \cite{B1}, \cite{D1},   \cite{E1}):
\begin{equation}\label{e8.1}
\dfrac{d^2u}{dx^2}+\left(\dfrac{\gamma}{x}+\dfrac{\delta}{x-1}+\dfrac{\varepsilon}{x-a}\right)\dfrac{du}{dx}+\dfrac{\alpha\beta x-q}{x(x-1)(x-a)}u=0,
\end{equation}
where the parameters satisfy $\alpha+\beta+1=\gamma+\delta+\varepsilon$.

Under the hypotheses that $\alpha\beta\ne 0$ and $\gamma+\varepsilon$ is not zero or a negative integer, an expansion of the solution $u(x)$ as a series of hypergeometric functions is investigated in \cite{C1}.

Let $m$ and $p$ be positive integers, $p\geq m+1$, and $n\in {\mathbb R}$, $n\ne 0$. In (\ref{e8.1})
choose 
$$ \alpha=m,\,\,\beta=n,\,\, \gamma=p+1-m-n,\,\, \delta=m+n-p+1,  $$
$$ \varepsilon=m+n-1,\,\,a=\dfrac{1}{2},\,\, q=\dfrac{1}{2}\left(mn-(m+n-p)(m+n-1)\right).  $$

\begin{theorem}
	\label{t8.1} With the above parameters, the solution $u(x)$ of (\ref{e8.1}) is given by
	\begin{equation}\label{e8.2}
	u(x)=\displaystyle\sum_{k=0}^{\infty}\dfrac{\left(\dfrac{m+n-1}{2}\right)_k\left(\dfrac{p-m}{2}\right)_k\left(\dfrac{p-n}{2}\right)_k}{k!\left(\dfrac{p}{2}\right)_k\left(\dfrac{p+1}{2}\right)_k}{}_2F_1(m,n;p+2k;x),
	\end{equation}
	and ${}_2F_1(m,n;p+2k;x)$ are elementary functions. Moreover, if there exists a positive integer $r$ such that $m+n=3-2r$ or $n-p=2r-2$, then the series (\ref{e8.2}) reduces to a finite sum and so $u(x)$ is an elementary function.
\end{theorem}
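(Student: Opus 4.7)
The plan is to read (\ref{e8.2}) off from the general hypergeometric-series expansion for Heun solutions in \cite{C1} after substituting the chosen parameters, and then to harvest the elementarity and termination claims from that formula together with Theorem \ref{t2.1}.

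First I would recall the expansion scheme of \cite{C1}: under the hypotheses on $\alpha$, $\beta$, $\gamma+\varepsilon$ quoted just before the theorem, a local solution of (\ref{e8.1}) near $x=0$ admits a representation of the form $u(x) = \sum_{k\ge 0} c_k\, {}_2F_1(\alpha,\beta;\gamma+\varepsilon+2k;x)$, in which the scalars $c_k$ satisfy a three-term recurrence whose coefficients depend on $a$, $q$, and the other Heun parameters. For the present choice one has $\alpha=m$, $\beta=n$, and $\gamma+\varepsilon=(p+1-m-n)+(m+n-1)=p$, so the third hypergeometric argument in the expansion is $p+2k$, as in (\ref{e8.2}). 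The decisive point is that $a=1/2$ together with the specific value $q=\tfrac{1}{2}\bigl(mn-(m+n-p)(m+n-1)\bigr)$ are exactly those for which the middle coefficient of the three-term recurrence collapses to $0$, leaving a two-term recurrence whose solution is the product of Pochhammer symbols displayed in (\ref{e8.2}). Solving this two-term recurrence with the normalization $c_0=1$ yields the stated coefficients.

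Elementarity of each summand is then immediate: since $p\ge m+1$, the inequality $p+2k\ge m+1$ holds for every $k\ge 0$, so ${}_2F_1(m,n;p+2k;x)$ falls within the scope of Theorem \ref{t2.1} and is elementary. For the termination statement, recall that a Pochhammer symbol $(a)_k$ vanishes for all $k\ge r$ precisely when $a=1-r$ with $r$ a positive integer. The numerator of $c_k$ contains $\bigl(\tfrac{m+n-1}{2}\bigr)_k$ and $\bigl(\tfrac{p-n}{2}\bigr)_k$: the first vanishes for $k\ge r$ exactly when $\tfrac{m+n-1}{2}=1-r$, i.e.\ $m+n=3-2r$; the second, when $\tfrac{p-n}{2}=1-r$, i.e.\ $n-p=2r-2$. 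Either condition truncates (\ref{e8.2}) to a sum of at most $r$ elementary hypergeometric functions, making $u(x)$ elementary.

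The principal obstacle is the first step: verifying, within the framework of \cite{C1}, that $a=1/2$ together with the prescribed $q$ is precisely what degenerates the three-term recurrence into a two-term one, and that the resulting recurrence ratio is exactly the one producing the three Pochhammer symbols in the numerator and two in the denominator of (\ref{e8.2}). This is a direct, if somewhat painstaking, substitution rather than a conceptual difficulty; the symmetry of the equation under $x\mapsto 1-x$ induced by $a=1/2$ is what underlies the simplification, and the particular $q$ is the value of the accessory parameter at that symmetric point which annihilates the relevant recurrence coefficient identically in $k$.
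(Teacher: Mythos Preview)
Your approach is essentially the same as the paper's: invoke the hypergeometric expansion of \cite{C1}, then use Theorem~\ref{t2.1} for elementarity and Pochhammer vanishing for termination. The only difference is packaging: the paper simply checks the explicit hypotheses \cite[(43)]{C1}---namely $\alpha\beta\ne 0$, $\gamma+\varepsilon=p$, $a=\tfrac12$, $\gamma+\delta=2$, and $q=a\alpha\beta+a(1-\delta)\varepsilon$---and then reads off \cite[(45)]{C1} directly, whereas you narrate the underlying mechanism (collapse of the three-term recurrence); note in particular that the condition $\gamma+\delta=2$, which you do not mention, is part of what \cite{C1} requires and should be verified alongside $a=\tfrac12$ and the $q$-identity.
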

\begin{proof}
	It is easy to check that $\alpha\beta\ne 0$, $\gamma+\varepsilon=p\geq 2$, $a=\dfrac{1}{2}$, $\gamma+\delta=2$, $q=a\alpha\beta+a(1-\delta)\varepsilon$, and so the conditions \cite[(43)]{C1} are satisfied. According to \cite[(45)]{C1}, the solution of the Heun equation (\ref{e8.1}) is given by
	$$ u(x)=\displaystyle\sum_{k=0}^{\infty}\dfrac{\left(\frac{\varepsilon}{2}\right)_k \left(\frac{\gamma+\varepsilon-\alpha}{2}\right)_k  \left(\frac{\gamma+\varepsilon-\beta}{2}\right)_k}{k!  \left(\frac{\gamma+\varepsilon}{2}\right)_k  \left(\frac{1+\gamma+\varepsilon}{2}\right)_k}\,\,{}_2F_1(\alpha,\beta;\gamma+\varepsilon+2k;x), $$
	which leads immediately to (\ref{e8.2}).
	
	Using Theorem \ref{t2.1} we see that ${}_2F_1(m,n;p+2k;x)$ is an elementary function for each $k\geq 0$.
	
	Let $m+n=3-2r$, or $n-p=2r-2$, with $r$ an integer, $r\geq 1$. Then
	$$ \left(\dfrac{m+n-1}{2}\right)_r=0,\textrm{ or } \left(\dfrac{p-n}{2}\right)_r=0,  $$
	and so the series in (\ref{e8.2}) reduces to the sum for $k\in\left\{0,1,\dots,r-1\right\}$; this means that in this case $u(x)$ is an elementary function for which explicit expressions can be obtained using results from Sections \ref{s2}-\ref{s4}.
\end{proof}
\begin{remark}
	Usually, a Heun function is normalized by $u(0)=1$. For the equation (\ref{e8.1}), \cite[(46)]{C1} yields
	$$ u(0)={}_3F_2\left(\dfrac{\gamma+\varepsilon-\alpha}{2}, \dfrac{\gamma+\varepsilon-\beta}{2}, \dfrac{\varepsilon}{2};\dfrac{\gamma+\varepsilon}{2}, \dfrac{1+\gamma+\varepsilon}{2};1\right),  $$
	where ${}_3F_2$ is Clausen's generalized hypergeometric function. With the parameters used in Theorem \ref{t8.1} we get
	$$ u(0)={}_3F_2\left(\dfrac{p-m}{2},\dfrac{p-n}{2},\dfrac{m+n-1}{2};\dfrac{p}{2},\dfrac{1+p}{2};1\right).  $$
	
	Thus $u(x)/u(0)$ satisfies the normalization condition.
\end{remark}

$  $

\noindent{\bf Acknowledgement.} 
The work of the first author was financed  by Lucian Blaga University of Sibiu $\&$ Hasso Plattner Foundation research grants LBUS-IRG-2019-05.


\begin{thebibliography}{99}
	
	\bibitem{*} U. Abel, {\it Elementary function representations for the moments of the Meyer-K\"onig and Zeller operators}, arXiv: 1911.02563v1 [math.GM]
	
	
	
	\bibitem{1} U. Abel,  {\it The moments for the Meyer–K\"onig and Zeller operators}, J. Approx.	Theory 82(3), 352–361 (1995).
	
	\bibitem{abel} U. Abel, {\it Unpublished manuscript}, 2019.
	
	

	
		\bibitem{2} J.A.H. Alkemade, {\it  The second moment for the Meyer–K\"onig and Zeller operators}, J. Approx. Theory 40(3), 261–273 (1984).
	
	\bibitem{Alkemade} J.A.H. Alkemade, {\it On a differential equation and the Laplace- Stieltjes transform in the theory of linear positive operators}, Thesis, Delft University Press, 1984.
	
\bibitem{A} F. Altomare, M. Cappelletti Montano, V. Leonessa, I. Raşa, {\it Markov
	Operators, Positive Semigroups and Approximation Processes}, 61, De
	Gruyter Studies in Mathematics, 2014.
	
\bibitem{C}	F. Altomare, I. Rasa,  {\it Lipschitz contractions, unique ergodicity and
	asymptotics of Markov semigroups}, Bollettino U. M. I. (9), 5(2012),
	1-17.
	

	
	\bibitem{3} A. B\u arar, G. Mocanu, I. Ra\c sa, {\it Heun functions related to entropies}, RACSAM (2019), 113: 819-830.
	
	\bibitem{4} J. Bustamante, M.A. Jim\'enez Pozo,  {\it Meyer-K\"onig and Zeller operators and some of their modifications}, Jaen J. Approx. 5(2), 2013, 101-178. 
	
	\bibitem{5} W. Chen, {\it The second moment for the Meyer-K\"onig and Zeller-type operators}, J. Xiamen Univ. Natur. Sci. 29(2), 1990, 119-123 (in Chinese).
	
	\bibitem{A1} J. Detrich,  R.W. Conn, {\it Finite sum evaluation of the Gauss hypergeometric function in an important special case}, Math. Comput. 33, 788–791 (1979).
	
	\bibitem{6} A. Erdelyi, {\it Higher Transcendental Functions}, Vol. I,  McGraw-Hill Book Company, New York, 1953.
	
\bibitem{D} I. Gavrea, M. Ivan, {\it On the iterates of positive linear operators
	preserving the affine functions}, J. Math. Anal. Appl. 372 (2010)
	366-368.
	
	\bibitem{7} I. Gavrea, M. Ivan, {\it An elementary function representation of the second-order moment of the Meyer-K\"onig and Zeller operators}, Mediterr. J. Math. (2018), 15:20.
	
	\bibitem{8} H. Heilmann, {\it Commutativity of Durrmeyer-type modifications of Meyer-K\"onig and Zeller and Baskakov operators}, in: B. Bojanov (ed.), Constructive Theory of Functions, Proc. Internat. Conf., Varna, Bulgaria, 2002, pp. 295-301, DARBA, Sofia.
	
	\bibitem{C1} 	
	T.A. Ishkhanyan, T.A. Shahverdyan, A.M. Ishkhanyan, {\it Expansions of the solutions
	of the general Heun equation governed by two-term recurrence relations for coefficients},
	Advances in High Energy Physics 2018, Article ID 4263678, (2018). 
	
	\bibitem{R5} A. Ishkhanyan,  K.A. Suominen,  {\it New solutions of Heun’s general equation}, J. Phys. A Math. Gen. 36, L81–L85 (2003).
	
\bibitem{B1} G. Kristensson, {\it Second Order Differential Equations. Special Functions and Their Classification}, Springer, 2010.

\bibitem{R16}  C. Leroy, A.M.  Ishkhanyan,  {\it Expansions of the solutions of the confluent Heun equation in terms of the incomplete Beta and the Appell generalized hypergeometric functions}, Integral Transform. Spec. Funct. 26(6), 451–459 (2015). 
	
	\bibitem{9} L. Lewin, {\it Polylogarithms and associated functions},  North-Holland, Amsterdam, 1981.
	
	\bibitem{D1} R.S. Maier, {\it The 192 solutions of the Heun equation}, Math. Comp. 76,
	811-843 (2007).
	
	\bibitem{E1} R.S. Maier, {\it On reducing the Heun equation to the hypergeometric equation}, J. Differ. Equ. 213, 171–203 (2005). 
	
	\bibitem{B} I. Rasa,  {\it $C_0$-Semigroups and iterates of positive linear
	operators:
	asymptotic behaviour}, Rendiconti del Circolo Matematico di Palermo, Ser.
	II, Suppl. 82(2010),123-142.
	
\bibitem{R15}	V.A. Shahnazaryan,  T.A. Ishkhanyan, T.A. Shahverdyan, A.M. Ishkhanyan,
 {\it New relations for the derivative of the confluent Heun function}, 
 Armen. J. Phys. 5, 146–156 (2012). 
	
\end{thebibliography}
\end{document}